\documentclass[11pt, oneside]{article}   	% use "amsart" instead of "article" for AMSLaTeX format
\usepackage{geometry}                		% See geometry.pdf to learn the layout options. There are lots.
\usepackage{graphicx}				% Use pdf, png, jpg, or eps§ with pdflatex; use eps in DVI mode
\usepackage{amssymb}
\usepackage{amsmath}
\usepackage{amsthm}
\usepackage{harpoon}
\usepackage{accents}

\usepackage{tikz}  
\usepackage{float}
\restylefloat{table}
\usepackage{mathrsfs}
\usepackage{verbatim}
\usepackage{enumitem}  

\usetikzlibrary{positioning,chains,fit,shapes,calc}  

\newtheorem{theorem}{Theorem}
\newtheorem{lemma}{Lemma}
\newtheorem{definition}{Definition}

\newtheorem{remark}{Remark}

\newcommand{\ppp}{\[\begin{aligned}}
\newcommand{\ooo}{\end{aligned}\]}

\begin{document}

\title{On Boolean polynomials and the Union-Closed Conjecture}
\author{Mario DeFranco}
\maketitle

\abstract{For a set of $m$ subsets of a universe set of size $n$, we construct a Boolean polynomial $\mathrm{ICC}_{m,n}(X)$ such that the Union-Closed Conjecture is true for this $m$ and $n$ if and only if $\mathrm{ICC}_{m,n}(X)$ is the zero Boolean polynomial. We use an equivalent formulation, called the Intersection-Closed Conjecture.} 

\section{Introduction} 

Fix integers $m$ and $n$ both greater than 1. Let $[1,n]$ denote the set of integers $\{ j \colon 1 \leq i \leq n\}$, and suppose we have $m$ distinct sets $A_i \subset [1,n], 1 \leq i \leq m$. Let $A$ denote the set of these sets: 
\[
A = \{A_i \colon 1 \leq i \leq m\}.
\]
Also suppose that $A$ is union-closed, i.e.
\[
A_{i_1} \cup A_{i_2}  \in A
\]
for any pair $1 \leq i_1 <i_2 \leq m$. 
Then the Union-Closed Conjecture states that there exists some $j \in [1,n]$ such that 
\[
\#\{ i \colon j \in A_i \} \geq \lceil \frac{m}{2} \rceil.
\]
See \cite{Bruhn} for a history of this conjecture up to 2013. It is also known as Frankl's Conjecture. 

In this paper, we consider the Intersection-Closed Conjecture, a well-known equivalent formulation; instead of $A$ being union-closed, suppose that it is intersection-closed, i.e. 
\[
A_{i_1} \cap A_{i_2}  \in A
\]
for any pair $1 \leq i_1 <i_2 \leq m$. Then the Intersection-Closed Conjecture (statement ICC$(m,n)$) states that there exists some $j \in [1,n]$ such that 
\[
\#\{ i \colon j \in A_i \} \leq \lfloor \frac{m}{2} \rfloor.
\]

In Section 2, we define a Boolean polynomial $\mathrm{ICC}_{m,n}(X)$ such that statement ICC$(m,n)$ is equivalent to this polynomial being the zero Boolean polynomial. Our approach is use the standard encoding of sets as elements in $\mathbb{F}_2^n$ and then to construct characteristic functions for the premises and conclusion of the conjecture. In Section 3, we prove some formulas for certain Boolean polynomials used in our construction. In Section 4, we mention some other problems which can similarly formulated using Boolean polynomials.  

We note that the paper \cite{Lozin} phrases the sets $A_i$ in terms of Boolean functions, but does not construct the characteristic functions of various properties of the sets as done in Section 2 here. 

\section{The Boolean polynomial $\mathrm{ICC}_{m,n}(X)$ }
\begin{definition} 
Fix integers $m,n \geq 2$. Consider a set of $mn$ Boolean variables $$\{x_{i,j} : 1 \leq i \leq m; 1 \leq j \leq n \},$$ i.e. $x_{i,j}^2 = x_{i,j}$. Let $X_i$ denote the sequence  
$$X_i = (x_{i,1}, \ldots, x_{i,n}).$$ 
Let $B$ denote the Boolean algebra $$B=\mathbb{F}_2[\{x_{i,j}\}_{i,j}] .$$

For each specialization $S$ of Boolean variables  to values in $\mathbb{F}_2$, each sequence $X_i$ corresponds to a subset of $\{ 1, \ldots, n\}$, so we will denote such a set by $X_i|_S$. We will also let $u_j, v_j$ for $1 \leq j  \leq n$ denote Boolean variables, and we will denote specializations of $u_j,v_j$ to $\mathbb{F}_2$ by $S$ also. We will call any polynomial of Boolean variables with coefficients in $\mathbb{F}_2$ a \emph{``Boolean polynomial"}. 
\end{definition} 

\begin{definition} For the variables $u=(u_j)_{j=1}^n, v=(v_j)_{j=1}^n$, define the Boolean polynomial $\mathrm{PairwiseDistinct}(u,v)$
$$
\mathrm{PairwiseDistinct}(u,v)=1+\prod_{j=1}^n (1+u_j+v_{j}).
$$

Then the sets $u|_S$ and $v|_S$ are distinct if and only if 
$$\mathrm{PairwiseDistinct}(u,v)|_S=1.$$

\end{definition} 

\begin{definition} 

Define $\mathrm{Distinct}(X)  \in B$ by
$$
\mathrm{Distinct}(X) =\prod_{1 \leq i_1<i_2 \leq m}\mathrm{PairwiseDistinct}(X_{i_1},X_{i_2}). 
$$
\end{definition} 

\begin{definition} 
A set $u|_S$ is a member of the set $X|_S$ if and only if 
$$
\prod_{i=1}^m \mathrm{PairwiseDistinct}(u,X_i)|_S = 0.
$$ 
Define $\mathrm{Member}(X,u)$ by 
$$
\mathrm{Member}(X,u) = 1+ \prod_{i=1}^m \mathrm{PairwiseDistinct}(u,X_i)
$$
\end{definition} 

\begin{definition} 

Define the sequence $u \cap v$ 
$$
u \cap v= (u_jv_j)_{j=1}^n
$$

Thus define the Boolean polynomial $\mathrm{IntersectionClosed}(X)$ by 
$$
\mathrm{IntersectionClosed}(X) = \prod_{1 \leq i_1<i_2 \leq m}\mathrm{Member}(X,X_{i_1} \cap X_{i_2}) 
$$

Therefore the set $X|_S$ is intersection-closed if and only if 
$$
\mathrm{IntersectionClosed}(X)|_S = 1.
$$
\end{definition} 

\begin{definition} 
Define a matching on the set $[1,2k]$ to be a set partition of $[1,2k]$ with each set having size 2. Define a matching on the set $[1,2k+1]$ to be a set partition of $[1,2k+1]$ with one set of size 1 and all other sets of size 2. Let $\mathrm{Matchings}(m)$ denote the set of all matchings of $[1,m]$. For $M \in  \mathrm{Matchings}(m)$, define
$$
H_{M,j}(X) = \begin{cases} \prod_{\{ i_1,i_2\} \in M} (1+ x_{i_1,j}x_{i_2,j}) &\text{ if $m$ is even} \\ 
(1+x_{i_0, j})\prod_{\{ i_1,i_2\} \in M} (1+ x_{i_1,j}x_{i_2,j}) &\text{ if $m$ is odd} 
\end{cases}
$$
where $\{i_0\}$ is the unique set of size 1 in $M$, if $m$ is odd. 

For a specialization $S$ of $X$ and integer $j, 1 \leq j \leq n$, define the statement $H(S,j)$ to be: 
\[
\#\{ i \colon j \in X_i|_S\} \leq  \lfloor \frac{m}{2}\rfloor
\]
 where $m$ is the number of sets in the set $X|_S$. 
\end{definition}

\begin{remark}If statement H$(S,j)$ is true, then there exists some matching $M$ of $[1,m]$ such that  
$$
H_{M,j}(X)|_S = 1.
$$

Thus if statement $H(S,j)$ is true, then 
$$
\prod_{M \in \mathrm{Matchings}(m)} (1+H_{M,j}(X))|_S=0. 
$$
\end{remark}

\begin{definition} 
Define $$\mathrm{HalfMembership}(X) = \prod_{j=1}^n \prod_{M \in \mathrm{Matchings}(m)} (1+H_{M,j}(X)).$$
\end{definition}

\begin{definition} 
Let $\mathrm{Specializations}_{\mathrm{ICC}}(m,n)$ denote the set of specializations $S$ of the variables $x_{i,j}$ such that the set $X|_S$ is an intersection-closed set of distinct sets. Define the statement $\mathrm{ICC}(m,n)$ to be ``For a specialization $S \in \mathrm{Specializations}_{\mathrm{ICC}}(m,n)$, there exists some integer $j$ such that 
\[
\#\{ i \colon j \in X_j|_S \} \leq \lfloor \frac{m}{2}\rfloor.  
\]
\end{definition}

\begin{theorem} 
Statement $\mathrm{ICC}(m,n)$ is true if and only if $\mathrm{ICC}_{m,n}(X)$ is the zoo Boolean polynomial. 
\end{theorem}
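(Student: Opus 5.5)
The polynomial $\mathrm{ICC}_{m,n}(X)$ has not been written out explicitly before the statement. I will take it to be the product of the three characteristic polynomials already constructed,
\[
\mathrm{ICC}_{m,n}(X)=\mathrm{Distinct}(X)\,\mathrm{IntersectionClosed}(X)\,\mathrm{HalfMembership}(X).
\]
The whole argument rests on one basic fact: a Boolean polynomial, meaning an element of $B$ reduced modulo $x_{i,j}^2=x_{i,j}$, is the zero element of $B$ if and only if it evaluates to $0$ under every specialization $S$ of the $x_{i,j}$ to $\mathbb{F}_2$. I would justify this by writing the polynomial in its multilinear normal form. If it is nonzero, choose a monomial $\prod_{(i,j)\in T}x_{i,j}$ of minimal support among those appearing. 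Specializing $x_{i,j}=1$ exactly for $(i,j)\in T$ kills every other monomial and leaves the value $1$. So the theorem reduces to a pointwise statement: for every $S$, $\mathrm{ICC}_{m,n}(X)|_S=1$ holds exactly when $S$ is a counterexample to the conjecture.

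Next I would compute each factor at a fixed $S$.
\begin{enumerate}
\item $\mathrm{Distinct}(X)|_S=1$ if and only if the sets $X_i|_S$ are pairwise distinct. This follows from the definition of $\mathrm{PairwiseDistinct}$, since $\prod_j(1+u_j+v_j)|_S=1$ exactly when $u_j=v_j$ for all $j$.
\item $\mathrm{IntersectionClosed}(X)|_S=1$ if and only if $X|_S$ is intersection-closed, as noted after its definition. Here $\mathrm{Member}(X,u)|_S=1$ exactly when $u|_S$ equals some $X_i|_S$.
\item $\mathrm{HalfMembership}(X)|_S=1$ if and only if no $j$ satisfies $H(S,j)$.
\end{enumerate}
Multiplying, $\mathrm{ICC}_{m,n}(X)|_S=1$ if and only if $S\in\mathrm{Specializations}_{\mathrm{ICC}}(m,n)$ and every $j$ has $\#\{i: j\in X_i|_S\}>\lfloor m/2\rfloor$, which is precisely a counterexample to statement $\mathrm{ICC}(m,n)$. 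Combining this with the first paragraph proves both directions of the theorem.

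The main obstacle is the third item, because the Remark gives only one direction. I need to show that $\prod_{M}(1+H_{M,j}(X))|_S=0$ if and only if $H(S,j)$ holds. Let $c=\#\{i: j\in X_i|_S\}$.

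For the converse direction, suppose some matching $M$ has $H_{M,j}(X)|_S=1$. Then no pair $\{i_1,i_2\}\in M$ has both $x_{i_1,j}=x_{i_2,j}=1$, and, if $m$ is odd, $x_{i_0,j}=0$. Hence each pair contains at most one index $i$ with $j\in X_i|_S$, and the singleton contains none, so $c\le\lfloor m/2\rfloor$.

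For the forward direction (the Remark), suppose $c\le\lfloor m/2\rfloor$. I would construct the matching explicitly: pair each of the $c$ indices with $j\in X_i|_S$ with a distinct index not containing $j$. This is possible because there are $m-c\ge c$ such indices. When $m$ is odd, at least one of these non-containing indices remains unused after this pairing, and I choose one of them as the singleton $i_0$. The remaining indices all have $x_{i,j}=0$, so they can be matched arbitrarily. This $M$ gives $H_{M,j}(X)|_S=1$.

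Finally, $\mathrm{HalfMembership}(X)|_S=\prod_j\prod_M(1+H_{M,j})|_S$ equals $1$ if and only if for every $j$ every factor is $1$, that is, if and only if $H(S,j)$ fails for all $j$.
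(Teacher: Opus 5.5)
Your proof is correct and follows the paper's argument: you take $\mathrm{ICC}_{m,n}(X)=\mathrm{Distinct}(X)\,\mathrm{IntersectionClosed}(X)\,\mathrm{HalfMembership}(X)$, show that it evaluates to $1$ at $S$ exactly when $S$ is a counterexample, and conclude from the fact that a Boolean polynomial is zero if and only if it vanishes under every specialization. You also prove two steps the paper only asserts: that fact itself, via the minimal-support monomial, and both directions of the Remark, via the explicit matching construction.
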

\begin{proof}
If the statement ICC$(m,n)$ is true, then for any $S \in \mathrm{Specializations}_{\mathrm{ICC}}(m,n)$. 
we have
$$
\begin{aligned} 
\mathrm{Distinct}(X)|_S&=1\\ 
\mathrm{IntersectionClosed}(X)|_S&=1 \\
\mathrm{HalfMembership}(X)|_S&=0.
\end{aligned}
$$
And if $S \notin \mathrm{Specializations}_{\mathrm{ICC}}(m,n)$, then either 
$$
\mathrm{Distinct}(X)|_S=0 \text{ or } \mathrm{IntersectionClosed}(X)|_S=0. 
$$

So, if statement ICC$(m,n)$ is true, then the polynomial
$$ \label{7}
\mathrm{ICC}_{m,n}(X) = \mathrm{Distinct}(X)
\mathrm{IntersectionClosed}(X)
\mathrm{HalfMembership}(X)
$$
evaluates to 0 for all specializations $S$, and thus must be the zero Boolean polynomial. 

Conversely, suppose $\mathrm{ICC}_{m,n}(X)$ is the zero Boolean polynomial. Then for $S \in \mathrm{Specializations}_{\mathrm{ICC}}(m,n)$, we know 
$$ \mathrm{Distinct}(X)|_S = 
\mathrm{IntersectionClosed}(X)|_S = 1,
$$ so must have 
$$\mathrm{HalfMembership}(X)|_S=0.$$ Thus $H_{M,j}(X)|_S=1$ for some $M$ and $j$, which implies that the element $j$ is in at most $\lfloor \frac{m}{2} \rfloor$ sets of $X|_S$. Thus the statement ICC$(m,n)$ is true if and only if $\mathrm{ICC}_{m,n}(X)$ is the zero Boolean polynomial. This completes the proof.
\end{proof}
\section{Formulas for Member$(X,u)$ and  \newline Member$(X,u)$PairwiseDistinct$(u,v)$}

\begin{definition} 
For a set $W \subset \mathbb{Z}$, let $2^W$ the set of subsets of $W$, and let $(2^W)'$ denote the set of non-empty subsets of $W$. For integer $k \geq 0$, let ${W \choose k}$ denote the set of subsets of $W$ of size $k$, which we write as ordered $k$-tuples 
\[
(i_1, \ldots, i_k).
\]  
Ket $W^c$ denote the complement of $W$ in $[1,n]$. 
%For a sets $W,P \subset [1,n]$, let $W \backslash P$ denote 
%\[
%W \backslash P = \{ i \colon i \in W, i \notin P\}.
%\]
For a set $V$ of Boolean variables, let $e_r(V)$ denote the $r$-th elementary symmetric polynomial on these variables. 
\end{definition}
\begin{lemma} \label{l Member}
\begin{align*}
\mathrm{Member}(X,u) = &\prod_{j=1}^n (1+u_j)\\ 
 &+\sum_{k=1}^{m-1} \sum_{(i_1, \ldots, i_k) \in {[1,n] \choose k}} \sum_{(W_{i_1}, \ldots, W_{i_k}) \in ((2^{[1,n]} )' )^k} F( \cap_{l=1}^k W_{i_l}))\prod_{l=1}^k   \prod_{j \in W_{i_l}} x_{i_l, j}\\ &+\sum_{(W_{1}, \ldots, W_{m}) \in ((2^{[1,n]} )' )^m} (\prod_{j \in (\cap_{l=1}^m W_l)^c} (1+u_j) )
 \prod_{l=1}^m \prod_{j \in W_{l}} x_{l, j}\end{align*}
where for any set $W \subset [1,n]$, 
\begin{align*}
 F(W)= (\prod_{j \in W^c} (1+u_j)  ) \sum_{r=1}^{|W|} e_r(\{ u_j \colon j \in W\})) 
\end{align*}

\end{lemma}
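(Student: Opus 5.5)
The plan is a direct expansion of the definition
$$
\mathrm{Member}(X,u)=1+\prod_{i=1}^m \mathrm{PairwiseDistinct}(u,X_i),\qquad \mathrm{PairwiseDistinct}(u,X_i)=1+\prod_{j=1}^n(1+u_j+x_{i,j}),
$$
followed by simplification with the Boolean relations $u_j^2=u_j$, $x_{i,j}^2=x_{i,j}$. Throughout I read the index set of the middle sum as ${[1,m]\choose k}$, since the $i_l$ index the sets $X_{i_l}$. I expect the printed $[1,n]$ there to be a typo.

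\emph{Step 1: expand a single factor.} Treat $1+u_j+x_{i,j}$ as $(1+u_j)+x_{i,j}$ and distribute over $j$. This gives
$$
\prod_{j=1}^n(1+u_j+x_{i,j})=\sum_{W\subset[1,n]}\Big(\prod_{j\in W}x_{i,j}\Big)\Big(\prod_{j\in W^c}(1+u_j)\Big).
$$
Separate the term $W=\emptyset$ and set $a=1+\prod_{j=1}^n(1+u_j)$ and $c_W=\prod_{j\in W^c}(1+u_j)$. Then
$$
\mathrm{PairwiseDistinct}(u,X_i)=a+\sum_{W\in(2^{[1,n]})'}c_W\prod_{j\in W}x_{i,j}.
$$

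\emph{Step 2: expand the product over $i$.} Multiplying these $m$ factors amounts to choosing, for each $i$, either the term $a$ or some nonempty $W_i$. Group the resulting terms by the set $\{i_1<\dots<i_k\}\subset[1,m]$ of indices at which a nonempty $W$ was chosen. A term with these choices equals
$$
a^{m-k}\prod_l c_{W_{i_l}}\prod_l\prod_{j\in W_{i_l}}x_{i_l,j}.
$$
It is then simplified using two Boolean facts:
\begin{itemize}
\item $a$ is idempotent, so $a^{m-k}=a$ when $k<m$ and $a^{0}=1$ when $k=m$;
\item each $1+u_j$ is idempotent, so $\prod_l c_{W_{i_l}}=\prod_{j\in\bigcup_l W_{i_l}^c}(1+u_j)=c_{\cap_l W_{i_l}}$.
\end{itemize}

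\emph{Step 3: identify each piece.}
\begin{itemize}
\item The $k=0$ term is $a$. Adding the outer $1$ gives $1+a=\prod_j(1+u_j)$, which is the first line of the lemma.
\item The $k=m$ terms have no factor $a$, and yield exactly the last line with coefficient $\prod_{j\in(\cap_l W_l)^c}(1+u_j)$.
\item For $1\le k\le m-1$ the coefficient is $a\,c_W$ with $W=\cap_l W_{i_l}$. Split $\prod_{j=1}^n(1+u_j)=\prod_{j\in W}(1+u_j)\cdot c_W$. Using $c_W^2=c_W$, this gives $a\,c_W=c_W\bigl(1+\prod_{j\in W}(1+u_j)\bigr)$.
\end{itemize}
Finally, expand $\prod_{j\in W}(1+u_j)=\sum_{r=0}^{|W|}e_r(\{u_j\colon j\in W\})$. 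Since $e_0=1$, we get $1+\prod_{j\in W}(1+u_j)=\sum_{r=1}^{|W|}e_r$, so $a\,c_W=F(W)$. This is also consistent when $W=\emptyset$, where both sides are $0$.

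The main obstacle is bookkeeping rather than any real difficulty. One must check that grouping by the index set $\{i_1,\dots,i_k\}$ loses no terms and counts none twice. One must also be careful with idempotence: in $a\,c_W$, the factors $(1+u_j)$ for $j\in W^c$ appear in both $a$ and $c_W$, and collapse because $(1+u_j)^2=1+u_j$ over $\mathbb{F}_2$. I would verify the final identity on the small case $m=2$, $n=1$ as a sanity check.
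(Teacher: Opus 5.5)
Your proposal is correct and takes essentially the paper's route. Both expand the definition directly, merge $\prod_l\prod_{j\in W_{i_l}^c}(1+u_j)$ into $\prod_{j\in(\cap_l W_{i_l})^c}(1+u_j)$ by idempotence, and identify $\prod_{j\in W^c}(1+u_j)+\prod_{j=1}^n(1+u_j)$ with $F(W)$. The only difference is bookkeeping: you absorb the $W=\emptyset$ term into the idempotent constant $a=1+\prod_j(1+u_j)$ and use $a^{m-k}=a$, while the paper keeps empty $W$'s and counts the $2^{m-k_0}-1\equiv 1 \pmod 2$ padded terms, which is the binomial expansion of $a^{m-k_0}$; your reading of the index set as ${[1,m] \choose k}$ is also right.
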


\begin{proof} 
By definition 
\begin{align*}
\mathrm{Member}(X,u) &= 1  + \prod_{i=1}^m (1+ \prod_{j=1}^n (1+u_j +x_{i,j}))\\ 
&= 1 + \prod_{i=1}^m (1+ \sum_{W \subset [1,n]} (\prod_{j \in W} x_{i,j} )\prod_{j \in W^c} (1+u_j) ) \\ 
&= 1+ 1+\sum_{k=1}^m \sum_{(i_1, \ldots, i_k) \in {[1,n] \choose k}} \sum_{(W_{i_1}, \ldots, W_{i_k}) \in ({2^{[1,n]} } )^k} \prod_{l=1}^k (\prod_{j \in W_{i_l}} x_{i_l, j})\prod_{j \in W_{i_l}^c} (1+u_j).\end{align*} 

In the sum over index $k$, at $k$ equal to some $k_0<m$, fix some index $\vec{i} = (i_1, \ldots, i_{k_0})$ and then some index $\vec{W}=(W_{i_1}, \ldots, W_{i_{k_0}})$ such that each of these sets $W_{i_l}$ is non-empty. The contribution from this triple $(k_0, \vec{i}, \vec{W})$ is
\begin{align*} 
&\prod_{l=1}^{k_0} (\prod_{j \in W_{i_l}} x_{i_l, j}\prod_{j \in W_{i_l}^c} (1+u_j)) \\ 
=& ( \prod_{l=1}^{k_0}\prod_{j \in W_{i_l}} x_{i_l, j}) \prod_{j \in \cup_{l=1}^{k_0} W_{i_l}^c} (1+u_j) \\ 
=& (\prod_{l=1}^{k_0}\prod_{j \in W_{i_l}} x_{i_l, j}) \prod_{j \in (\cap_{l=1}^{k_0} W_{i_l})^c} (1+u_j)
\end{align*}
The only other terms that have this same factor of the variables $x_{i,j}$ come from terms with $k=k_1 >k_0$ with indices $(i_1', \ldots, i_{k_1}')$ and $(W_{i_1'}, \ldots, W_{i_{k_1}'})$ such that each $i_l$ is equal to some $i_{l'}'$ with $W_{i_l} = W_{i_{l'}'}$, and the remaining $W_{i_l'}$ are empty. Since $k_1>k_0$, at least one of these sets is empty, so the $u$-contribution is
\[
\prod_{j=1}^n (1+u_j).
\]
Summing over all such $k_1$ and counting the corresponding indices gives the $u$-contribution 
\begin{align*}
&\prod_{j=1}^n (1+u_j)\sum_{k=k_0+1}^m {m-k_0 \choose k-k_0}\\
=& \prod_{j=1}^n (1+u_j) (2^{m-k_0}-1)\\ 
=&  \prod_{j=1}^n (1+u_j) \end{align*}
because $m>k_0$. 
Thus the $u$-coefficient of this $x$-product is 
\[
(\prod_{j \in (\cap_{l=1}^{k_0} W_{i_l})^c} (1+u_j)  ) + \prod_{j=1}^n (1+u_j). 
\]
Factoring the above expression gives the stated form of $ F(\cap_{l=1}^{k_0} W_{i_l})$. 

In the sum over $k$, the terms with $W_{i_l}$ all empty contribute 
\begin{align*}
&\prod_{j=1}^n (1+u_j)\sum_{k=1}^m {m\choose k} \\ 
=& \prod_{j=1}^n (1+u_j)(2^m-1)\\ 
=& \prod_{j=1}^n (1+u_j).
\end{align*}
This completes the proof. 
\end{proof}

\begin{lemma} 
\begin{align*}
&\mathrm{Member}(X,u)\mathrm{PairwiseDistinct}(u,v)  = \sum_{P \in (2^{[1,n]})'} G(P) \\ 
&\prod_{j \in P^c}^n (1+u_j v_j)\\ 
 &+\sum_{k=1}^{m-1} \sum_{(i_1, \ldots, i_k) \in {[1,n] \choose k}} \sum_{(W_{i_1}, \ldots, W_{i_k}) \in ((2^{[1,n]} )' )^k} F( \cap_{l=1}^k W_{i_l}, P)\prod_{l=1}^k   \prod_{j \in W_{i_l}} x_{i_l, j}\\ &+\sum_{(W_{1}, \ldots, W_{m}) \in ((2^{[1,n]} )' )^m} (\prod_{j \in (\cap_{l=1}^m W_l)^c \backslash P} (1+u_jv_j) )  \prod_{l=1}^m \prod_{j \in W_{l}} x_{l, j}\end{align*}
where for any set $W \subset [1,n]$, 
\[
 F(W,P)= (\prod_{j \in W^c \cap P^c} (1+u_j)  ) \sum_{r=1}^{|W \cap P^c|} e_r(\{ u_j v_j \colon j \in W \cap P^c\})) 
\]
and 
\[
G(P) = \prod_{j \in P} (u_j+v_j).
\]
\end{lemma}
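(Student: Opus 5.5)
The plan is to multiply the formula of Lemma~\ref{l Member} termwise by an expansion of $\mathrm{PairwiseDistinct}(u,v)$, and then collect the coefficient of each $x$-monomial. First I will expand
\[
\prod_{j=1}^n (1+u_j+v_j) = \sum_{P \subset [1,n]} \prod_{j \in P}(u_j+v_j),
\]
choosing the summand $u_j+v_j$ for $j \in P$ and the summand $1$ for $j \in P^c$. The term $P=\emptyset$ contributes $1$ and cancels the leading $1$, so
\[
\mathrm{PairwiseDistinct}(u,v) = \sum_{P \in (2^{[1,n]})'} G(P).
\]
This is where the outer sum over nonempty $P$ and the factor $G(P)$ in the statement come from. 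I then multiply this sum against each of the three groups of terms in Lemma~\ref{l Member}: the $x$-free term $\prod_j(1+u_j)$, the terms with $1 \le k \le m-1$, and the terms with $k=m$. The $x$-monomials $\prod_l \prod_{j\in W_{i_l}} x_{i_l,j}$ are left untouched, since $G(P)$ involves only $u,v$. It therefore suffices to simplify the $u,v$-coefficients coordinate by coordinate.

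The key step is a coordinatewise Boolean reduction. Every coefficient in Lemma~\ref{l Member} is a product over $j$ of factors in $u_j$ alone: either $(1+u_j)$, or $u_j$ when $j$ lies in the support of an elementary symmetric term $e_r(\{u_j\colon j\in W\})$. For $j \in P$ I will use $u_j^2=u_j$, $(1+u_j)(u_j+v_j) = v_j(1+u_j)$ and $u_j(u_j+v_j) = u_j(1+v_j)$ to absorb the factors at $j\in P$ into the $P$-part. For $j\in P^c$ the factors $(1+u_j)$ and $u_j$ remain and are to be matched with the factors $(1+u_jv_j)$ and $u_jv_j$ in the statement. This explains why $F(W,P)$ involves only $W\cap P^c$, and why the elementary symmetric polynomials are taken in the products $u_jv_j$. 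I will handle the sum $\sum_{r\ge1} e_r$ via the identity $\sum_{r=1}^{|V|} e_r(V) = 1+\prod_{y\in V}(1+y)$. This turns $F(W)$ into the difference $\prod_{j\in W^c}(1+u_j) + \prod_{j}(1+u_j)$, which was how it arose in the proof of Lemma~\ref{l Member}. After multiplying by $G(P)$, I refactor this difference in the same way to obtain $F(W,P)$.

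The main obstacle is this last matching on $P^c$. A literal product of $(1+u_j)$ with the factor $1$ coming from $j\in P^c$ gives $(1+u_j)$, not $(1+u_jv_j)$. So I must either reorganize the sum over $P$, using that summing over $P$ with fixed $P\cap Q$ recombines $(u_j+v_j)$ and $1$ into $1+u_j+v_j$ and hence into expressions in $u_jv_j$ after multiplying by $(1+u_j)$, or else conclude that some factors in the displayed statement should read $(1+u_j)$. I would first attempt the regrouping. For a fixed $x$-monomial, I would sum the $P$-contributions over all $P$ sharing the same intersection with $(\cap_l W_{i_l})^c$, and check the result on the four values of $(u_j,v_j)$ for a single coordinate. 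Since all expressions factor over $j$, a one-coordinate truth-table verification suffices to confirm or correct each factor.
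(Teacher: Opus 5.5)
The step you flag as the main obstacle cannot be completed, because the displayed identity is false. Your proposal ends without deciding this, so it is neither a proof nor a refutation. Regrouping the sum over $P$ cannot help: regrouping does not change the polynomial, and two Boolean polynomials are equal only if they agree at every specialization.

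Take all $x_{i,j}=0$, $u=(1,0,\ldots,0)$ and $v=(0,\ldots,0)$.
\begin{itemize}
\item On the left, $\mathrm{Member}(0,u)=1+\prod_{i=1}^m\bigl(1+\prod_j(1+u_j)\bigr)=\prod_j(1+u_j)=0$, so the left side is $0$.
\item On the right, every term of the second and third sums carries a nonempty $x$-monomial and vanishes. In the first sum, $G(P)=\prod_{j\in P}u_j$ is nonzero only for $P=\{1\}$, which contributes $G(\{1\})\prod_{j\neq 1}(1+u_jv_j)=1$. So the right side is $1$.
\end{itemize}
Your own coordinate computations already show the mismatch on both parts of the partition. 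For $j\in P$ you get $(1+u_j)(u_j+v_j)=v_j(1+u_j)$, not the bare factor $u_j+v_j$ of $G(P)$. For $j\in P^c$ the factor $(1+u_j)$ survives where the statement has $(1+u_jv_j)$. The one-coordinate truth table you proposed would settle the question negatively, so the correct end point of your plan is this counterexample.

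The paper's proof makes exactly the step you could not justify. It uses $(u_j+v_j)u_jv_j=0$ to discard the coordinates $j\in P$. But Lemma~\ref{l Member} supplies factors $(1+u_j)$ and $e_r(\{u_j\})$, not $(1+u_jv_j)$ and $e_r(\{u_jv_j\})$. So the sentence ``combining this with Lemma~\ref{l Member}'' silently substitutes $u_jv_j$ for $u_j$.

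A corrected version is true, and is presumably what was intended, since $\mathrm{IntersectionClosed}(X)$ is built from $\mathrm{Member}(X,X_{i_1}\cap X_{i_2})$. Change the left side to $\mathrm{Member}(X,u\cap v)\,\mathrm{PairwiseDistinct}(u,v)$, and read the factor $\prod_{j\in W^c\cap P^c}(1+u_j)$ in $F(W,P)$ as $\prod_{j\in W^c\cap P^c}(1+u_jv_j)$. For that version your strategy works directly in the variables $y_j=u_jv_j$:
\begin{itemize}
\item Lemma~\ref{l Member} holds with $u_j$ replaced by the idempotent $y_j$.
\item Since $G(P)y_j=0$ for $j\in P$, multiplying a multilinear polynomial in the $y_j$ by $G(P)$ amounts to setting $y_j=0$ for $j\in P$.
\item This turns $\prod_j(1+y_j)$, $F(W)$ and $\prod_{j\in(\cap_l W_l)^c}(1+y_j)$ into exactly the three displayed coefficients.
\end{itemize}
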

\begin{proof} 
By definition 
\begin{align*}
\mathrm{PairwiseDistinct}(u,v) &= 1+\prod_{j=1}^n (1+u_j+v_{j}) \\ 
&= \sum_{P \in (2^{[1,n])'}} G(P).
\end{align*}
Using 
\[
(u_j+v_j)u_j v_j=0,
\]
we obtain for any set $W \subset [1,n]$
\[
G(P)\prod_{j \in W} (1+ u_j v_j) = G(P)\prod_{j \in W \cap P^c} (1+ u_j v_j) 
\]
and 
\[
G(P)e_r(\{ u_j v_j \colon j \in W\}) = G(P)e_r(\{ u_j v_j \colon j \in W \cap P^c\}).
\]
Combining this with Lemma \ref{l Member} gives the formula in the lemma statement. This completes the proof. 
\end{proof}

\section{Other Problems}

We note that Boolean polynomials may be used to formulate other problems such as Twin-Prime Conjecture, the Odd Perfect Number Problem, the Collatz Problem, and Diophantine equations.

\end{document}